\documentclass[11pt]{article}
\usepackage{amsfonts}
\usepackage{mathrsfs,color,amscd,amssymb,enumerate,amsthm,amsmath,bm,graphicx,psfrag,subfigure,unicode-math}
\usepackage{latexsym}
\usepackage{float,fancybox,shapepar,setspace,hyperref}
\usepackage{pgf,tikz}

\makeatletter
\def\leftharpoonfill@{\arrowfill@\leftharpoonup\relbar\relbar}
\def\rightharpoonfill@{\arrowfill@\relbar\relbar\rightharpoonup}
\newcommand\rbjt{\mathpalette{\overarrow@\rightharpoonfill@}}
\newcommand\lbjt{\mathpalette{\overarrow@\leftharpoonfill@}}
\makeatother

\makeatletter

\renewcommand{\@seccntformat}[1]{{\csname the#1\endcsname}{\normalsize .}\hspace{.5em}}
\makeatother

\def \[{\begin{equation}}
\def \]{\end{equation}}

\def \ss {\subseteq}

\def\bqed{ \hfill $\blacksquare$}

\newtheorem{thm}{Theorem}

\newtheorem{claim}{Claim}

\newtheorem{lem}{Lemma}

\newtheorem{conj}{Conjecture}

\usetikzlibrary{arrows}
\begin{document}

\title{A note on long nontrivial cycle in Hamiltonian graphs}
\author{Xiaolin Wang\thanks{ Corresponding author. School of Mathematics and Statistics, Fuzhou University, Fuzhou
350108, P.R. China (Email: xiaolinw@fzu.edu.cn)}~, Jiabao Yang\thanks{School of Mathematics, Nanjing University, Nanjing 210093, P.R. China (Email: jbyang1215@nju.edu.cn)}~, Guangmiao Yu\thanks{School of Mathematics and Statistics, Fuzhou University, Fuzhou
350108, P.R. China (Email: guangmiaoyu@163.com)}~, Ruilin Zheng\thanks{School of Mathematics, Nanjing University, Nanjing 210093, P.R. China (Email: rlzhengmath@163.com)}}
\date{}
\maketitle

\begin{abstract}
Let $G$ be an $n$-vertex graph containing a Hamiltonian cycle and with  minimum degree at least $3$. Gir\~{a}o, Kittipassorn and Narayanan (Israel J. Math., 2019) proved that $G$ contains another cycle of length at least $n-O(n^{4/5})$.  In this paper, we improve their bound to $n-O(n^{2/3})$. Our proof is combined with  a constructive method, which is based on a   poset result, and  a  nonconstructive method. And the bound is best possible under these two methods.

\vskip 2mm

\noindent{\bf Keywords}: Hamiltonian cycle, Minimum degree, Poset
\end{abstract}
{\setcounter{section}{0}

\section{Introduction}\setcounter{equation}{0}

\vskip 2mm
Let $G$ be a simple and connected graph of order $n$.  A \emph{Hamiltonian} cycle is a cycle containing all the vertices in $G$. We say a graph is \emph{Hamiltonian} if it contains a Hamiltonian cycle. Throughout this paper, we suppose that $G$ is Hamiltonian and fix a Hamiltonian cycle $C$ of $G$. A cycle of $G$ is called \emph{nontrivial} if it is distinct from $C$. For an integer $k\geq 1$, we say that $G$ is $k$-regular if every vertex of $G$ has degree $k$. We denote by $\delta(G)$ the minimum degree of $G$.

The problem of finding long cycles distinct from a given Hamiltonian cycle has been studied extensively for more than 80 years; see, for example, \cite{B,G} for surveys. The famous longstanding Sheehan's conjecture \cite{S} states that, for each $k\geq 3$, every $k$-regular Hamiltonian graph has at least two distinct Hamiltonian cycles. An early result in this direction is Smith's theorem, which states that every $3$-regular Hamiltonian graph contains an even number of Hamiltonian cycles through any prescribed edge. Tutte \cite{Tutte} gave the first published proof of this theorem in 1946. Later, in 1978, Thomason \cite{TAG} extended the Smith's theorem to all $k$-regular Hamiltonian graphs with odd $k$ by means of a non-constructive argument, commonly referred to as the ``lollipop'' argument. Twenty years later, Thomassen \cite{TC1,TC2,TC3} proved  Sheehan's conjecture for all $k\geq 300$ by combining the arguments of Thomason \cite{TAG} with the Lov\'asz local lemma \cite{EL}. In 2006, Haxell, Seamone, and Verstra\"ete \cite{HSV} verified Sheehan's conjecture for all $k\geq 23$. Thus, in view of Thomason's result for odd $k$, the only remaining cases are even values $4\leq k\leq 22$. Moreover, Petersen's $2$-factor decomposition theorem \cite{P} shows that it would suffice to establish the conjecture for $k=4$, since this would imply the conjecture for all even $k\geq 4$. For more results related to Sheehan's conjecture, see \cite{GJ,GMZ,JW,K,SW,Z}.

It is natural to ask whether the weaker condition $\delta(G)\geq 3$ already forces a Hamiltonian graph to contain a second Hamiltonian cycle. 
Unfortunately, Entringer and Swart \cite{ES} constructed a Hamiltonian graph in which every vertex has degree $3$ or $4$, but the graph has a unique Hamiltonian cycle. 
Further counterexamples can be found in \cite{F,GJ}. 
It is worth noting that all these counterexamples still contain a long cycle distinct from the Hamiltonian cycle. Recently, Gir\~{a}o, Kittipassorn, and Narayanan \cite{GKN} posed the following conjecture.

\begin{conj}[Gir\~{a}o, Kittipassorn, and Narayanan \cite{GKN}]\label{conj}
Let $G$ be a simple graph with $n$ vertices and $\delta(G) \geq 3$ containing a Hamiltonian cycle. Then there exists a constant $c>0$ such that $G$ contains another cycle of length at least $n-c$.
\end{conj}

Gir\~{a}o, Kittipassorn, and Narayanan \cite{GKN} proved a polynomial-error version of this conjecture: there exists a constant $c>0$ such that $G$ contains another cycle of length at least $n-cn^{4/5}$. 
In this paper, we improve their bound as follows.

\begin{thm}\label{main}
Let $G$ be a simple graph with $n$ vertices and $\delta(G) \geq 3$ containing a Hamiltonian cycle.  Then there exists a constant $c$ such that $G$ contains another cycle of length at least $n-cn^{2/3}$.
\end{thm}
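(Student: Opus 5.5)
Label the Hamiltonian cycle $C=v_0v_1\cdots v_{n-1}v_0$. Since $\delta(G)\ge 3$, every vertex is incident to at least one chord. The basic objects are \emph{crossing} chords and \emph{nested} chords, and the plan has a constructive half and a nonconstructive half, balanced at a threshold $L=n^{1/3}$ (roughly $n^{2/3}=n/L$).

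First, the cycles that short chords give us. For a chord $e=v_iv_j$ write $\ell(e)$ for the shorter distance along $C$ between its ends. If two chords $v_av_b$ and $v_cv_d$ cross, with $a<c<b<d$ in cyclic order, then the cycle $v_a\to v_c$ (along $C$) $\to v_d$ (chord) $\to v_b$ (backwards along $C$) $\to v_a$ (chord) misses only the vertices strictly between $v_c$ and $v_b$ on one side, and symmetric variants miss other arcs. Likewise, a single chord $v_iv_j$ together with a short detour skips $\ell(e)-1$ vertices. So a single chord of length at most $cn^{2/3}$, or a crossing pair whose interlocking arcs are short, immediately gives a nontrivial cycle of length at least $n-O(n^{2/3})$. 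We may therefore assume that every chord has length greater than $cn^{2/3}$, and similarly that no two chords have ends that are close in a crossing configuration.

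Second, the poset and Dilworth step. Partition $C$ into $m=n^{1/3}$ consecutive arcs $A_1,\dots,A_m$, each of length about $n^{2/3}$. Every vertex sends a chord to some other arc, since short chords have been excluded. Order chords by nesting, i.e. by containment of the shorter arc they span. This is a partial order on a set of at least $n/2$ chords. By Dilworth/Mirsky, there is either a chain of $t$ pairwise nested chords or an antichain of size at least $n/(2t)$ whose chords are pairwise crossing or disjoint (non-nested). Pairwise crossing chords with ends in the same two arcs give a nontrivial cycle by the construction above that misses only $O(n^{2/3})$ vertices. By pigeonhole over the $m^2$ pairs of arcs, an antichain of size above $m^2=n^{2/3}$ contains two chords joining the same pair of arcs, and these either cross or are parallel. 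Parallel chords $v_av_b$ and $v_{a'}v_{b'}$, with $a,a'$ close and $b,b'$ close, combined with a third chord at a vertex lying between them, which exists because that vertex has degree $3$, produce a long cycle. So I will take $t\approx n^{1/3}$.

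Third, the nonconstructive half, which I expect to be the main obstacle. The remaining case is a long chain of $n^{1/3}$ nested chords, and more generally a structure in which chords are forced into nested families. Here I would use Thomason's lollipop argument, in the form used by Gir\~{a}o, Kittipassorn and Narayanan. Restrict to the subgraph consisting of $C$ together with the chords of a suitably chosen segment. In the auxiliary lollipop graph on Hamiltonian paths, every vertex has even degree except at endpoints where the path closes up into a different Hamiltonian path or cycle. Apply this to a segment of length about $n^{2/3}$ in which the degree-$3$ condition guarantees odd degrees; the parity argument then yields a second spanning structure of that segment. That structure is spliced back with the rest of $C$ through two nested chords, losing only $O(n^{2/3})$ vertices. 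The delicate part is making the segment self-contained: every chord from the segment must either stay inside it or be handled by the nested chain. The $n^{1/3}$ nested chords are used to cut $C$ into pieces whose boundaries have few crossing chords. Optimizing the lengths gives the loss $n/L+L^2=O(n^{2/3})$ at $L=n^{1/3}$.
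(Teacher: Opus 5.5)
There is a genuine gap, and it is in your third step, which carries the whole difficulty and is only gestured at. The parity argument has hypotheses that $\delta(G)\ge 3$ does not supply. Thomason's lollipop argument needs all degrees odd. Thomassen's form (Theorem~\ref{Thomassen}) needs a vertex set that is independent on the Hamiltonian cycle and such that every vertex outside it has a chord into it. The Entringer--Swart graphs (degrees $3$ and $4$, unique Hamiltonian cycle) show the argument cannot be run on $G$ as it stands. The graph must be modified first, and the $O(n^{2/3})$ loss has to be paid for by that modification.

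You never say what is modified, or why chords from segment vertices to far-away vertices do not destroy the hypothesis; a chain of nested chords gives no control over them. You also do not say what the ``second spanning structure'' is or how two nested chords splice it into a cycle of $G$, and the count $n/L+L^2$ is not attached to any of this. Moreover, a single chain of $n^{1/3}$ nested chords among all chords is just what Dilworth returns when your antichain step fails. So Step 3 would have to handle every such graph with essentially no structure in hand.

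There is also a smaller hole in Step 2. Suppose $v_av_b$ and $v_{a'}v_{b'}$, with $a<a'<b'<b$, both have length about $n/2$, so they are incomparable in your shorter-arc order. Suppose the third chord joins some $v_x$, $a<x<a'$, to the midpoint of $v_{a'}\cdots v_{b'}$. Then no nontrivial cycle formed from $C$ and these three chords is longer than about $3n/4$. This one is repairable: order chords by their interior relative to a fixed edge and split the antichain into interlacing blocks, as in Theorem~\ref{poset-based}.

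What you are missing is that the case split should be on interlacing structure, not on chain versus antichain. The paper first reduces to the edge-minimal case, where no two vertices of degree at least $4$ are chord-adjacent. It then takes a maximum independent chord set $I$ partitioned into $m$ interlacing pairs and splits at $m=n^{2/3}$.

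If $m\le n^{2/3}$, deleting the chords at the $4m$ endpoints of $I$ leaves an outerplanar graph, by maximality of $I$. Either this graph has at least $\sqrt n$ minimal chords, one of which cuts off at most $\sqrt n$ vertices, or the following happens. After contracting consecutive pairs that are chord-adjacent to a common vertex, Lemma~\ref{lem3.5} gives a set $S$ meeting every remaining chord with at most $r+M-2=O(m+\sqrt n)$ consecutive pairs. Then $X_I\cup S$ is dominating in the chord graph. Its $O(m+\sqrt n)$ consecutive pairs are contracted, and Theorem~\ref{Thomassen} is applied to the \emph{whole} contracted graph. Each contracted pair costs at most one vertex when the cycle is lifted back, and that is the entire loss.

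If $m\ge n^{2/3}$, the poset is used only on subsets of $I$. A large antichain gives loss $O(n/\sqrt m)$. Otherwise every subset of size at least $m/8$ contains a chain of length $\Omega(n^{1/3})$, which yields $\Omega(m)$ disjoint tight triples. Every chord of $I$ has an interlacing partner in $I$. So applying this once to $I$ and once to the partners of the resulting middle chords produces two tight triples with interlacing middle chords, and these give the cycle (Claims~\ref{claim3.3} and~\ref{claim3.4}).

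So chains are exploited constructively and only inside $I$, the parity argument is global, and the trade-off is $n/\sqrt m$ against $m$.
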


At the end of this section, we introduce another interesting and challenging problem concerning the distribution of cycle lengths in Hamiltonian graphs. 
At the 1999 conference ``Paul Erd\H{o}s and His Mathematics'', Jacobson and Lehel conjectured that, for every $k\geq 3$, every $k$-regular Hamiltonian graph on $n$ vertices contains cycles of at least $\Omega(n)$ distinct lengths; in other words, the number of distinct cycle lengths should be linear in $n$.  The  3-regular graph with $n/6+O(1)$ different cycle lengths can be seen in \cite{BGS}.
In 2016, Verstra\"ete \cite{V2} proposed the following stronger conjecture: every Hamiltonian graph $G$ on $n$ vertices with $\delta(G)\geq 3$ contains cycles of at least $\Omega(n)$ distinct lengths. 
A major result toward these two conjectures was obtained by Buci\'c, Gishboliner, and Sudakov \cite{BGS}, who proved that every Hamiltonian graph $G$ on $n$ vertices with $\delta(G)\geq 3$ contains cycles of at least $n^{1-o(1)}$ distinct lengths. 
They also asked whether, under the same assumptions, $G$ must contain a cycle of length $n-c$ for some constant $c>0$, and suggested that this problem may be useful for approaching the above two conjectures. 
For more results and problems concerning the distribution of cycle lengths in Hamiltonian graphs, see \cite{BGS,MPR}.

\section{Preliminaries}
In this section, we introduce some terminology and helpful theorems that will be used in the proof of Theorem~\ref{main}. Note that $G$ is a simple Hamiltonian graph with a fixed Hamiltonian cycle $C$. 
We call any edge in $E(G)\setminus E(C)$ a \emph{chord}. 
For any simple connected graph $H$, let $X\subseteq V(H)$ and $E_0\subseteq E(H)$. 
We denote by $H[X]$ the subgraph of $H$ induced by $X$, that is, the subgraph with vertex set $X$ and all edges of $H$ whose endpoints both lie in $X$. 
A set $X$ is called an \emph{independent set} in $H$ if no edge of $H$ has both endpoints in $X$. 
A set $X$ is called a \emph{dominating set} in $H$ if every vertex in $V(H)\setminus X$ is adjacent to at least one vertex of $X$.
We also say that $X$ \emph{dominates} $E_0$ in $H$ if every edge in $E_0$ has at least one endpoint in $X$. 
Finally, we denote by $H-E_0$ the subgraph obtained from $H$ by deleting all edges in $E_0$.

The following useful theorem is based on parity-based arguments and has been applied in the study of Sheehan's conjecture; see, for example, \cite{HSV,TC1,TC2,TC3}.

\begin{thm}[Thomassen \cite{TC2}]\label{Thomassen}
Let $H$ be a graph with a Hamiltonian cycle $C$ ($H$ may contain some parallel edges).
If there exists a set $X\subseteq V(H)$ such that $X$ is independent in $C$ and is a dominating set in the subgraph $H-E(C)$, 
then $H$ contains a nontrivial Hamiltonian cycle.
\end{thm}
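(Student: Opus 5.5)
We prove the statement by Thomason's lollipop argument, in the form used by Thomassen. Write $C=v_0v_1\cdots v_{m-1}v_0$. Because $X$ is independent in $C$, each $x\in X$ has both $C$-neighbours $x^-$ and $x^+$ outside $X$. Form the auxiliary graph $H'=H-\bigcup_{x\in X}\bigl(E_H(x)\setminus E(C)\bigr)$. This deletes every chord incident with $X$, so each vertex of $X$ has degree exactly $2$ in $H'$. Since $X$ dominates $H-E(C)$, every chord of $H$ has an endpoint in $X$, and therefore every chord is deleted. Hence $H'$ is precisely the cycle $C$. This shows that naively removing $X$-chords is not enough, so we will apply the lollipop argument to $H$ itself, with the edges at $X$ controlled.

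Fix a vertex $x_0\in X$ and the edge $e_0=x_0x_0^+\in E(C)$. Consider the graph $\mathcal{L}$ whose vertices are the Hamiltonian paths of $H$ starting at $x_0$ with first edge $e_0$. Join two such paths $P$ and $P'$ if $P'$ arises from $P$ by a Pósa rotation at the free end $y$: add an edge $yz$ with $z$ on $P$, then delete the edge from $z$ to its successor on $P$. The degree of $P$ in $\mathcal{L}$ is $d_H(y)-1$, except that a path whose end $y$ is adjacent to $x_0$ corresponds to a Hamiltonian cycle through $e_0$, and that edge closes it up rather than giving a rotation. The standard parity principle says every finite graph has an even number of odd-degree vertices. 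Applied to $\mathcal{L}$, it gives the following: if every path whose end lies in a suitable set has even degree, then the odd-degree path $C-x_0^-x_0$ must be paired with another odd-degree path, which yields a second Hamiltonian cycle.

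To make the parity condition hold with unrestricted degrees, I will use $X$ as follows. Restrict $\mathcal{L}$ to those Hamiltonian paths whose edge set, at every $x\in X\setminus\{x_0\}$, uses exactly the two $C$-edges $x^-x$ and $xx^+$. These are the paths that traverse each $x$ as it sits on $C$. For such a path, the end $y$ is never in $X\setminus\{x_0\}$, since an end vertex uses only one edge. Choosing the rotation edge $yz$ among chords, domination forces $y\in X$ or $z\in X$. If $y\notin X$, then $z\in X$, and the rotation would delete an edge at $z$ that must be kept. So we permit only rotations that preserve the constraint.

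After this restriction, each constrained path has degree equal to the number of admissible rotations, together with its closing edge. The key counting step is that this number is $1$ for the path $C-x_0^-x_0$ and even for every other constrained path. I expect this parity verification to be the main obstacle. In Thomassen's version the multigraph is modified, by doubling or deleting the chords at $X$, so that every vertex outside $X$ has degree parity making the count even, and parallel edges are allowed exactly for this reason. I would therefore first pass to a spanning subgraph in which every non-$X$ vertex has odd degree, obtained by deleting suitable chords incident with $X$. Domination and the independence of $X$ guarantee that enough such chords exist at every vertex and that the deletions do not destroy $C$. The classical Thomason odd-degree lollipop theorem then gives a second Hamiltonian cycle through $e_0$, which is the required nontrivial Hamiltonian cycle.
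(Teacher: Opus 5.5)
There is a genuine gap, and it starts with a misreading of the hypothesis. ``$X$ is a dominating set in $H-E(C)$'' means that every vertex outside $X$ has a chord to some vertex of $X$. It does not say that every chord has an endpoint in $X$: chords between two vertices outside $X$ are allowed. So your conclusion $H'=C$ and the later step ``domination forces $y\in X$ or $z\in X$'' are unfounded.

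More seriously, restricting the lollipop graph to paths that traverse every $x\in X\setminus\{x_0\}$ along $C$ forbids every chord at those vertices, and the second Hamiltonian cycle generally needs such chords. For example, let $C=v_0v_1\cdots v_5v_0$, add the chords $v_0v_2,v_0v_4,v_1v_3,v_3v_5$, and take $X=\{v_0,v_3\}$, which is independent in $C$ and dominating, with $x_0=v_0$ and $e_0=v_0v_1$. The only Hamiltonian path starting with $e_0$ and containing $v_2v_3$ and $v_3v_4$ is $C-v_5v_0$. So your constrained family contains no second cycle, although $v_0v_1v_2v_3v_5v_4v_0$ is one. Your stated key step (count $1$ for $C-x_0^-x_0$, even for every other constrained path) holds here vacuously, so it could not yield a second cycle anyway. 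The handshake argument requires the number of admissible rotations and the number of closing edges to have the same parity at \emph{every} path, including the initial one.

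The fallback in your last paragraph does not repair this. Thomason's lemma needs odd degree at every vertex other than the start, because each of them can be the free end of a path in the lollipop graph. After your deletions, the vertices of $X\setminus\{x_0\}$ still have arbitrary degree, and parity genuinely fails at such ends. For instance, take the wheel with hub $h$ and rim $r_1r_2r_3r_4r_1$, with $C=r_1r_2hr_3r_4r_1$ and $X=\{h,r_1\}$. Then $X$ is independent in $C$ and every vertex outside $X$ has degree $3$, yet exactly three Hamiltonian cycles contain $r_1r_2$.

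Controlling where the paths can end is precisely the content of Thomassen's theorem, and that is the missing idea:
\begin{enumerate}
\item Using domination, delete all chords except one chord from each $v\notin X$ to $X$. Now every vertex outside $X$ has degree $3$ and every chord joins $X$ to $V(H)\setminus X$.
\item Subdivide every edge of $C$ whose ends both lie outside $X$. Since $X$ is independent in $C$, the resulting multigraph is bipartite with classes $X\cup W$ ($W$ the subdivision vertices) and $V(H)\setminus X$.
\item The subdivided $C$ is a Hamiltonian cycle, so the classes have equal size. Hence every Hamiltonian path starting at $x_0\in X$ ends in $V(H)\setminus X$, at a vertex of degree $3$.
\item The lollipop argument then shows that the number of Hamiltonian cycles through $x_0x_0^+$ is even.
\item A second such cycle must use both edges at each vertex of $W$, since these have degree $2$. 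So it corresponds to a Hamiltonian cycle of $H$ different from $C$.
\end{enumerate}
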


A \emph{partially ordered set}, or \emph{poset} for short, is a set $S$ together with a partial order, denoted by $``\leq"$.
A \emph{chain} in a poset $S$ is a subset $S_0\subseteq S$ such that any two distinct elements of $S_0$ are comparable.
An \emph{antichain} in a poset $S$ is a subset $S_0\subseteq S$ such that no two distinct elements of $S_0$ are comparable.
A classical theorem in poset theory states that, for any positive integers $a$ and $b$, every poset of size $ab+1$ contains either a chain of size $a+1$ or an antichain of size $b+1$.
For convenience, we present the following revised version of this theorem.

\begin{thm}[Dilworth \cite{D}]\label{poset}
For any positive integers $a$ and $b$, every poset of size $ab$ contains either a chain of size $a$ or an antichain of size $b$.
\end{thm}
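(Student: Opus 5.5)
We prove this directly by a Mirsky-type layering argument, without deducing it from the classical $ab+1$ version. The trivial case comes first. If $a=1$, the poset has $ab=b\geq 1$ elements, so it is nonempty, and any single element is a chain of size $1=a$. So from now on we assume $a\geq 2$, and we suppose the poset $S$, with $|S|=ab$, contains no chain of size $a$. The aim is then to produce an antichain of size $b$.

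For each $x\in S$, let $h(x)$ be the maximum size of a chain in $S$ whose largest element is $x$. Such chains exist (for example $\{x\}$), and they have bounded size since $S$ is finite. Hence $h(x)\geq 1$. By our assumption no chain has size $a$, and any chain of size at least $a$ would contain one of size exactly $a$. Therefore $h(x)\leq a-1$ for every $x$, and $h$ maps $S$ into $\{1,\dots,a-1\}$.

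The key step is that each level set $L_i=\{x\in S: h(x)=i\}$ is an antichain. Take distinct comparable $x,y\in L_i$, say $x<y$. Appending $y$ to a longest chain ending at $x$ gives a chain ending at $y$ of size $h(x)+1$. Then $h(y)\geq h(x)+1$, which contradicts $h(x)=h(y)=i$.

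Finally, the $ab$ elements of $S$ are split among the $a-1$ sets $L_1,\dots,L_{a-1}$. By pigeonhole, some $L_i$ has size at least $ab/(a-1)>b$. Any $b$ of its elements form an antichain of size $b$, which proves Theorem~\ref{poset}. No step is a real obstacle. The only points needing care are that $h$ is well defined and bounded (by finiteness and the no-long-chain assumption) and the separate treatment of $a=1$, where the pigeonhole bound would divide by zero.
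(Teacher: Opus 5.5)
Your proof is correct, but it takes a different route from the paper, which gives no proof at all. The paper only presents the statement as a ``revised version'' of the classical fact that a poset of size $ab+1$ has a chain of size $a+1$ or an antichain of size $b+1$, and cites Dilworth.

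The implicit derivation from that classical fact is one line. For $a=1$ the statement is trivial. For $a\ge 2$, apply the classical fact with $a-1$ in place of $a$, using $ab\ge (a-1)b+1$.

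You instead prove the statement directly by Mirsky's height layering, which is exactly the standard proof of the classical $ab+1$ fact. So your argument is self-contained. It also uses only the elementary dual of Dilworth's theorem (partitioning into antichains by height), not Dilworth's harder chain-cover theorem that the citation invokes.

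Your pigeonhole step in fact gives a level set of size at least $\lceil ab/(a-1)\rceil\ge b+1$. The reduction gives the same bound, so the revised statement as written has one unit of slack.
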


It is worth mentioning that Buci\'c, Gishboliner and Sudakov \cite[Lemma~3.1]{BGS} used the classical Erd\H{o}s--Szekeres lemma,  which can be viewed as a special case of Theorem~\ref{poset},  to find two types of chord sets in  Hamiltonian graphs with minimum degree at least 3.

Before defining a poset on the set of all chords of $G$, we introduce some notation. 
The length of a path $P$, denoted by $|P|$, is the number of edges in $P$. 
We choose one  direction of $C$ as the \emph{forward} direction of $C$ and denote it by $\overrightarrow{C}$, and the other direction of $C$ as the \emph{backward} direction of $C$ and denote it by $\overleftarrow{C}$.
We can embed the graph $G$ in the plane such that the Hamiltonian cycle $C$ is the outer bound, and all the chords lie in the inner face of $C$.
Note that every chord $uv$ separates $C$ into two paths, namely $u\overrightarrow{C}v$ and $u\overleftarrow{C}v$. 
We call $V(u\overrightarrow{C}v)$ and $V(u\overleftarrow{C}v)$ the two \emph{domains} of $uv$. These two domains intersect precisely in $\{u,v\}$.

Two vertices are said to be \emph{chord-adjacent} if they are the two endpoints of a chord. And $u$ is called a \emph{chord-neighbor} of $v$ if $uv$ is a chord. 
Two chords $u_1v_1$ and $u_2v_2$ are said to \emph{interlace} if the vertices $u_1,v_1,u_2,v_2$ are distinct and appear in the order $u_1,u_2,v_1,v_2$ or $u_1,v_2,v_1,u_2$ along the forward direction of $C$. 
A set of chords is called \emph{interlacing} if every two chords in the set interlace.

Fix an edge $e\in E(C)$. We define a poset $S_e$ on the set of all chords of $G$ as follows. For a chord $f$, the two domains of $f$ partition the Hamiltonian cycle $C$ into two arcs. Exactly one of these two domains contains both endpoints of $e$; we call this domain the \emph{interior} of $f$ with respect to $e$, and the other domain is called the \emph{exterior} of $f$ with respect to $e$.
For any two distinct  $f_1,f_2\in S_e$, we say $f_1<f_2$ if the interior of $f_1$ is contained in the interior of $f_2$. It is straightforward to verify that $S_e$ is a poset with this partial order.

The following theorem shows that a large antichain in the poset of chords of $G$ guarantees the existence of a long nontrivial cycle in $G$.

\begin{thm}\label{poset-based}
Let $G$ be a simple $n$-vertex graph consisting of a Hamiltonian cycle $C$ together with $k$ chords. 
Suppose that there exists an edge $e\in E(C)$ and a constant $c_0>0$ such that the maximum size of a chain in $S_e$ is less than $\lfloor c_0\sqrt{k}\rfloor$. 
Then $G$ contains a nontrivial cycle of length at least
$n-\frac{8c_0n}{\sqrt{k}}.$
\end{thm}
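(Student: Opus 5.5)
The plan is to combine Theorem~\ref{poset} with a direct construction of a cycle from one or two chords of a large antichain in $S_e$. Set $a=\lfloor c_0\sqrt{k}\rfloor$ and $b=\lfloor k/a\rfloor$. Since $ab\le k$, we may apply Theorem~\ref{poset} to any $ab$ of the chords. Since there is no chain of size $a$, we obtain an antichain $\mathcal{A}$ with $m:=|\mathcal{A}|\ge b\ge \sqrt{k}/c_0-1$.

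We may assume $8c_0/\sqrt{k}<1$, since otherwise the claimed bound is non-positive. Under this assumption $a\ge 1$, and $m\ge 2$ once constants are adjusted; the degenerate small cases are handled trivially, since any chord plus the longer of its two arcs already gives a nontrivial cycle of length at least about $n/2$.

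\textbf{Step 1: structure of the antichain.} Cut $C$ at $e$ and view it as a path $x_1x_2\cdots x_n$ with $e=x_nx_1$. For a chord $f=x_ix_j$ with $i<j$, the interior of $f$ is $\{x_1,\dots,x_i\}\cup\{x_j,\dots,x_n\}$ and the exterior is $E_f=\{x_i,\dots,x_j\}$. Interiors are nested if and only if exteriors are nested. Hence $\mathcal{A}$ is a family of intervals $[i,j]$ no one of which contains another. Sorting the chords of $\mathcal{A}$ as $f_t=x_{i_t}x_{j_t}$ with $i_1<i_2<\dots<i_m$, the non-nesting forces $j_1<j_2<\dots<j_m$. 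Moreover, for consecutive chords $f_t,f_{t+1}$ there are only two possibilities: either $j_t\le i_{t+1}$ (the exteriors are disjoint apart from possibly a shared endpoint), or $i_t<i_{t+1}<j_t<j_{t+1}$, in which case the two chords interlace.

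\textbf{Step 2: one cycle per consecutive pair.} For each $t\in\{1,\dots,m-1\}$ we build a cycle $Z_t$ that uses a chord, and is therefore nontrivial.

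In the disjoint case, let $Z_t$ consist of $f_t$ together with the interior path of $f_t$. It misses at most $j_t-i_t-1\le i_{t+1}-i_t$ vertices.

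In the interlacing case, take the cycle
\[x_{i_t}\,f_t\,x_{j_t}\,\overleftarrow{C}\,x_{i_{t+1}}\,f_{t+1}\,x_{j_{t+1}}\,\overrightarrow{C}\,x_n\,e\,x_1\,\overrightarrow{C}\,x_{i_t}.\]
It misses exactly the vertices strictly between $x_{i_t}$ and $x_{i_{t+1}}$ and those strictly between $x_{j_t}$ and $x_{j_{t+1}}$, so it misses fewer than $(i_{t+1}-i_t)+(j_{t+1}-j_t)$ vertices.

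In both cases the number of missed vertices is at most $D_t:=(i_{t+1}-i_t)+(j_{t+1}-j_t)$.

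\textbf{Step 3: averaging.} By telescoping, $\sum_{t=1}^{m-1}D_t\le (i_m-i_1)+(j_m-j_1)<2n$. Hence some $t$ satisfies $D_t<2n/(m-1)$. Using $m-1\ge \sqrt{k}/c_0-2\ge \sqrt{k}/(4c_0)$, which holds when $\sqrt{k}\ge 8c_0/3$ and in particular under our assumption $\sqrt{k}>8c_0$, we get $D_t\le 8c_0n/\sqrt{k}$. This yields a nontrivial cycle of length at least $n-8c_0n/\sqrt{k}$.

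\textbf{Main obstacle.} The key point is the observation in Step 1 that non-nested intervals have both endpoint sequences increasing simultaneously; this is what makes the telescoping sum in Step 3 work. The remaining work is routine verification that the interlacing cycle in Step 2 is genuinely a cycle, and that the rounding losses in $a,b,m$ are absorbed by the constant $8$.
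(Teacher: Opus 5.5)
Your proof is correct, and it takes a shorter route than the paper. The paper first partitions the antichain $A$ into maximal interlacing blocks $A_1,\dots,A_s$ by scanning from $x$. Claims 1--3 show that the spans $P_i$ and $P_{i+2}$ are vertex-disjoint. It then keeps only the odd- or even-indexed blocks, losing a factor $2$. Within each block it uses consecutive pairs, or the single chord when $a_i=1$, losing another factor $2$. The resulting arcs are pairwise edge-disjoint with total length at most $n$, so some $L^i_j\le n/\ell\le 4n/|A|$.

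You skip the block structure. After cutting $C$ at $e$, an antichain of $S_e$ is a family of pairwise non-nested intervals. Sorted by left endpoint, the right endpoints therefore also increase. Every consecutive pair $f_t,f_{t+1}$, interlacing or not, then gives a nontrivial cycle missing at most $D_t$ vertices, and the $D_t$ telescope to less than $2n$. Your charged arcs are not disjoint: each stretch of $C$ may be counted once via left endpoints and once via right endpoints. But you use all $m-1$ consecutive pairs instead of roughly a quarter of the antichain, so you get $2n/(m-1)$, about half the paper's loss. You also need no analogue of Claims 1--3. The paper's block decomposition gives a structural picture of the antichain (its Figure 1), but this statement does not need it.

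Two small points of bookkeeping.

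\begin{itemize}
\item The assumption $8c_0/\sqrt{k}<1$ does not by itself give $a=\lfloor c_0\sqrt{k}\rfloor\ge 1$; take $c_0$ tiny. The theorem's hypothesis does give it: $k\ge 1$ means a single chord is a chain of size $1$, so the hypothesis forces $a\ge 2$.
\item Once $\sqrt{k}>8c_0$, you automatically have $k/a\ge \sqrt{k}/c_0>8$, so $b\ge 8$ and $m\ge 2$. No adjustment of constants or separate small case is needed.
\end{itemize}
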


\noindent{\bf Proof.}
Since $\lfloor c_0\sqrt{k}\rfloor \cdot\lfloor\sqrt{k}/c_0\rfloor \le k$, we may apply Theorem~\ref{poset} to  $S_e$ of size $k$. By assumption, $S_e$ contains no chain of size $\lfloor c_0\sqrt{k}\rfloor$.  Therefore, Theorem~\ref{poset} implies that $S_e$ contains an antichain $A$ of size at least $\lfloor\sqrt{k}/c_0\rfloor \geq \sqrt{k}/2c_0$.
We first describe the structure of $A$, as illustrated in Figure~\ref{A}. For convenience, suppose $e=xy$ and $y , x$ appear in the forward direction of $C$.

\begin{figure}[!htb]
\centering
\includegraphics[height=0.4\textwidth]{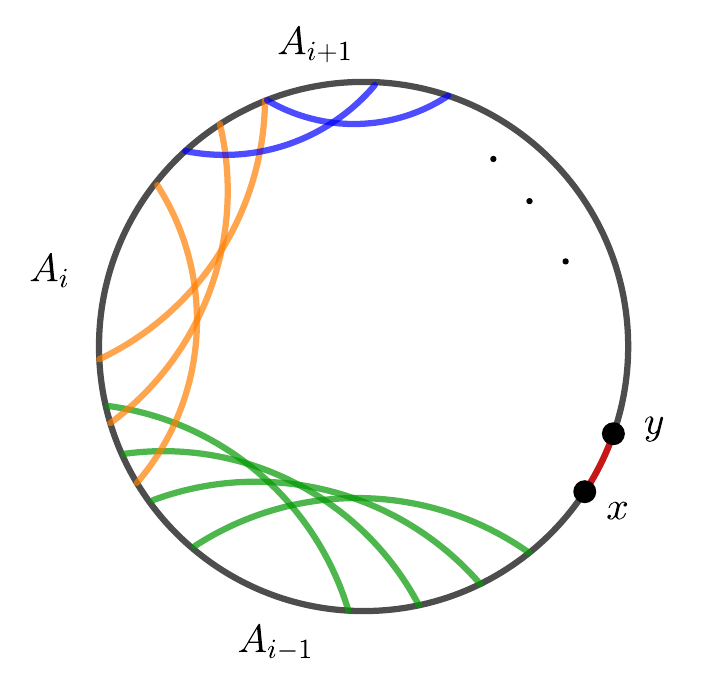}
\caption{The structure of the antichain $A$.}
\label{A}
\end{figure}

Starting from $x$, we construct a collection of pairwise disjoint interlacing chord subsets of $A$ by scanning along the forward direction of $C$. 
Suppose that disjoint interlacing chord subsets $A_1,\ldots,A_i$ of $A$ have already been constructed. 
If $A\setminus \left(\cup_{j=1}^{i} A_j\right)=\emptyset$, then the process terminates. Otherwise, choose a chord $uv$ in $A\setminus \left(\cup_{j=1}^{i} A_j\right)$ so that $u$ is the first endpoint encountered when we search the vertices starting from $x$ along the forward direction of $C$; here $u$ may coincide with $x$. 
If no other chord in $A\setminus \left(\cup_{j=1}^{i} A_j\right)$ interlaces $uv$, 
then let $A_{i+1}=\{uv\}$. Otherwise, let $A_{i+1}$ be the maximal interlacing chord set containing $uv$.
Since $|A|\leq k$, this process terminates after finitely many steps. Suppose that it terminates after $s$ steps. Then $A_1,\ldots,A_s$ form a partition of $A$.

For each $1\leq i\leq s$, we write $A_i=\{u_1^iv_1^i,\ldots,u_{a_i}^iv_{a_i}^i\}$, where $a_i\geq 1$ and the sequence $u_1^i,\ldots,u_{a_i}^i,v_1^i,\ldots,v_{a_i}^i$ appear in this order along the forward direction of $C$. 
Let $U_i=\{u_1^i,\ldots,u_{a_i}^i\}$ and $V_i=\{v_1^i,\ldots,v_{a_1}^i\}$.

\begin{claim}\label{1}
Suppose $s\geq 2$ and $i<s$. Then $U_{i+1}\ss V(v_1^{i}\overrightarrow{C}y)$.
\end{claim}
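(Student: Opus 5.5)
The plan is to straighten $C$ into a path and turn the antichain condition into a monotonicity statement about endpoints. With $e=xy$, and $y,x$ consecutive in the forward direction, traversing $\overrightarrow{C}$ from $x$ visits every vertex once and ends at $y$. So we may write $p<q$ when $p$ comes before $q$ on $x\overrightarrow{C}y$. For each chord in $A$ we label its endpoints $u<v$ in this order.

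\textbf{Step 1 (interiors and exteriors).} Since $x,y$ are consecutive on $C$, the interior of $uv$ with respect to $e$ is $V(v\overrightarrow{C}y)\cup V(x\overrightarrow{C}u)$. Its exterior is the segment $V(u\overrightarrow{C}v)$. Hence $f_1<f_2$ in $S_e$ exactly when the exterior segment of $f_2$ is contained in that of $f_1$.

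\textbf{Step 2 (antichain monotonicity).} Let $uv,u'v'$ be distinct chords of $A$. If $u=u'$, one exterior segment contains the other, which contradicts the antichain property. So left endpoints are distinct, say $u<u'$. Then $v'\le v$ would again give nesting, so $v<v'$. Thus in $A$ the left endpoints are pairwise distinct, and ordering the chords by left endpoint also strictly orders their right endpoints. In particular two chords $uv,u'v'$ of $A$ with $u<u'$ interlace if and only if $u'<v$, since then the four endpoints are distinct and appear in the order $u,u',v,v'$.

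\textbf{Step 3 (identifying $A_i$).} Let $R$ be the set of chords of $A$ not in $A_1\cup\cdots\cup A_{i-1}$. By construction $u_1^i$ is the smallest left endpoint in $R$. By Step 2, a chord $uv\in R$ other than $u_1^iv_1^i$ interlaces $u_1^iv_1^i$ if and only if $u_1^i<u<v_1^i$. Now take any two such chords with $u<u'<v_1^i$. Step 2 gives $v_1^i<v$, hence $u'<v$, so they interlace as well. Therefore the set of chords of $R$ with left endpoint in $V(u_1^i\overrightarrow{C}v_1^i)\setminus\{v_1^i\}$ is interlacing. Every interlacing set containing $u_1^iv_1^i$ lies inside it, so it is the unique maximal one, and it equals $A_i$.

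\textbf{Step 4 (conclusion).} Take $uv\in A_{i+1}$. It lies in $R\setminus A_i$, and it is not $u_1^iv_1^i$, so $u>u_1^i$. If $u<v_1^i$, Step 3 would put $uv$ in $A_i$, a contradiction. Hence $u\ge v_1^i$, i.e.\ $u\in V(v_1^i\overrightarrow{C}y)$. This gives $U_{i+1}\subseteq V(v_1^{i}\overrightarrow{C}y)$.

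The main obstacle is Step 3. The construction defines $A_i$ only as ``the maximal interlacing chord set containing $uv$'', so one must show this set is well defined and coincides with the chords whose left endpoint lies strictly between $u_1^i$ and $v_1^i$. The monotonicity from Step 2 is exactly what makes this work.
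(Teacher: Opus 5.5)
Your proof is correct. It reaches the same contradiction as the paper: a chord of $A_{i+1}$ whose left endpoint lies before $v_1^i$ would enlarge $A_i$. The organization, however, is different.

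The paper works directly with the given set $A_i$. It assumes $u\in V(u_1^i\overrightarrow{C}v_1^i)-v_1^i$ and splits into cases: $u$ lies between two consecutive left endpoints $u_j^i,u_{j+1}^i$, or $u$ lies strictly between $u_{a_i}^i$ and $v_1^i$. In each case it uses incomparability with the neighbouring chords ($u_j^iv_j^i$ and $u_{j+1}^iv_{j+1}^i$, or $u_{a_i}^iv_{a_i}^i$) to trap $v$ in the matching gap among the $v^i$'s. It then observes that $A_i\cup\{uv\}$ is interlacing, contradicting maximality.

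You instead first prove a global monotonicity lemma for the antichain: left endpoints are distinct, and the order of left endpoints matches the order of right endpoints. From this you get that interlacing with the single chord $u_1^iv_1^i$ forces interlacing with every other chord starting before $v_1^i$. That yields an explicit description of $A_i$ as the chords of $R$ whose left endpoint lies in $V(u_1^i\overrightarrow{C}v_1^i)-v_1^i$.

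Your route buys three things:
\begin{enumerate}
\item It removes the case analysis.
\item It shows that ``the maximal interlacing chord set containing $uv$'' in the construction is unique. The paper's wording takes this for granted, although its proof of Claim~1 only needs maximality.
\item It makes Claim~2 an immediate corollary: $u\ge v_1^i>u_{a_i}^i$ forces $v>v_{a_i}^i$.
\end{enumerate}
The paper's version needs no preliminary lemma and stays with the data as given, at the cost of a somewhat ad hoc case split.
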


\noindent{\bf{Proof.}}
Let $u\in U_{i+1}$, and let $uv$ be the chord in $A_{i+1}$ incident with $u$. Then $v\in V_{i+1}$.  By the choice of $A_i$, we have $u\in V(u_1^{i}\overrightarrow{C}y)$. Suppose to the contrary that $u\in V(u_1^{i}\overrightarrow{C}v_1^i)-v_1^i$.
If $a_i=1$  and $u=u_1^i$, no matter what $v$ lies in, $uv$ and $u_1^iv_1^i$ are comparable, a contradiction to the fact that $A$ is an antichain.
If $u\in V(u_j^i\overrightarrow{C}u_{j+1}^i)$ for some $1\leq j<a_i$, by the definition of the antichain $A$, then $v\notin V(u\overrightarrow{C}v_j^i)\cup V(v_{j+1}^i\overrightarrow{C}y)$. It follows that $v$ must lie in the internal of $v_j^i\overrightarrow{C}v_{j+1}^i$. Also by the definition of the antichain $A$, $u$ must lie in the internal of $u_j^i\overrightarrow{C}u_{j+1}^i$. Consequently, $A_i\cup \{uv\}$ is an interlacing chord set strictly larger than $A_i$, contradicting the choice of $A_i$.
It remains to consider that $u$ is an internal vertex in $u_{a_i}^i\overrightarrow{C}v_1^i$. By the definition of the antichain $A$, $v\notin V(u\overrightarrow{C}v_{a_i}^i)$. It implies that $v\in V(v_{a_i}^i\overrightarrow{C} y) - v_{a_i}^i$. However, $A_i\cup \{uv\}$ is a larger interlacing chord set, contradicting the choice of $A_i$.
Therefore, $U_{i+1}\ss V(v_1^{i}\overrightarrow{C}y)$, as required.
\bqed
\vskip 2mm

\begin{claim}\label{2}
Suppose $s\geq 2$ and $i<s$. Then $V_{i+1}\ss V(v_{a_i}^{i}\overrightarrow{C}y)-v_{a_i}^{i}$.
\end{claim}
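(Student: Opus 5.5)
We want to show that every $v\in V_{i+1}$ lies strictly beyond $v_{a_i}^i$, that is, in $V(v_{a_i}^i\overrightarrow{C}y)-v_{a_i}^i$. The plan is to use Claim~\ref{1} to locate the partner endpoint $u$, and then let the antichain property of $A$ and the maximality of $A_i$ rule out every other position for $v$.

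Fix $v\in V_{i+1}$ and let $uv\in A_{i+1}$ be its chord, so $u\in U_{i+1}$. By Claim~\ref{1}, $u\in V(v_1^i\overrightarrow{C}y)$. Since the $u$'s of $A_{i+1}$ precede its $v$'s along the forward direction starting from $x$, the vertex $v$ lies after $u$, hence also in $V(v_1^i\overrightarrow{C}y)$. Endpoints of distinct chords of $A$ may coincide, because $A$ is only an antichain. Even so, $v\neq v_1^i$: if $v=v_1^i$, then $u$ would lie between $v_1^i$ and $v$, which is impossible since $u\neq v$ and $u$ precedes $v$. So the only thing to exclude is that $v$ lies in $V(v_1^i\overrightarrow{C}v_{a_i}^i)$.

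Suppose for contradiction that $v\in V(v_1^i\overrightarrow{C}v_{a_i}^i)$. Then $u$ lies in the same segment and before $v$, so in particular $a_i\ge 2$. The interiors with respect to $e=xy$ are as follows.
\begin{itemize}
\item The interior of $u_j^iv_j^i$ is the arc from $v_j^i$ forward through $y,x$ to $u_j^i$.
\item The interior of $uv$ is the arc from $v$ forward through $y,x$ to $u$.
\end{itemize}
Choose $j$ with $v_j^i$ lying weakly before $u$ (for instance $j=1$). Then $v_j^i\preceq u<v\preceq$ the end of the segment, and $u_j^i$ comes before $u$. So the interior of $uv$, which runs from $v$ to $u$ through $x$, contains the interior of $u_j^iv_j^i$, which runs from $v_j^i$ to $u_j^i$ through $x$. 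This needs $v_j^i$ to be reached from $v$ only after passing through $u$ and $x$, which does not hold if $v_j^i$ lies before $u$.

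I will therefore redo the comparison with the correct orientation. Going forward from $v$ we reach $y$, then $x$, then $u_j^i$, then $v_j^i$, then $u$. So the interior of $uv$, the arc $v\to y\to x\to u$, contains the arc $v_j^i\to y \to x\to u_j^i$ exactly when $v_j^i$ lies in $V(v\overrightarrow{C}y)$. We therefore choose $j=a_i$, using $v\preceq v_{a_i}^i$. Then the interior of $u_{a_i}^iv_{a_i}^i$ is contained in that of $uv$, with $u_{a_i}^i\in V(x\overrightarrow{C}u)$ since $u_{a_i}^i$ comes before $v_1^i\preceq u$. The two chords are distinct, so they are comparable in $S_e$, contradicting that $A$ is an antichain.

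The main subtlety is handling coincidences of endpoints, such as $v=v_{a_i}^i$ or $u=v_1^i$. In those cases containment still holds, and distinctness of the chords gives strict comparability, which is why the claim excludes $v_{a_i}^i$. The maximality of $A_i$ is not needed here.
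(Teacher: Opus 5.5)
Your argument is correct and is essentially the paper's: you use Claim~\ref{1} to get $u\in V(v_1^i\overrightarrow{C}y)$, so $v$ comes after $u$, and then you rule out $v\in V(v_1^i\overrightarrow{C}v_{a_i}^i)$ by observing that the interior of $u_{a_i}^iv_{a_i}^i$ (from $v_{a_i}^i$ through $y,x$ to $u_{a_i}^i$) is contained in the interior of $uv$, which contradicts $A$ being an antichain. You should delete the abandoned first attempt with $v_j^i$ weakly before $u$, including the sentence ordering $u_j^i$, $v_j^i$, $u$ (which does not hold for $j=a_i$), since only the $j=a_i$ comparison is needed.
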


\noindent{\bf{Proof.}}
Let $v\in V_{i+1}$, and let $uv$ be the chord in $A_{i+1}$ incident with $v$, where $u\in U_{i+1}$. By Claim \ref{1}, $u\in V(v_1^{i}\overrightarrow{C}y)$.
Then $v\in V(v_1^i\overrightarrow{C}y)$.
 It is easy to see that $v\notin V(v_1^i\overrightarrow{C}v_{a_i}^i)$. Otherwise, the chords $uv$ and $u_{a_i}^iv_{a_i}^i$ would be comparable. which is a contradiction to the fact that $A$ is an antichain. Thus, $v\in V(v_{a_i}^{i}\overrightarrow{C}y)-v_{a_i}^{i}$ implies $V_{i+1}\ss V(v_{a_i}^{i}\overrightarrow{C}y)-v_{a_i}^{i}$.
This proves the claim.
\bqed
\vskip 2mm

Let $P_i=u_1^i\overrightarrow{C}v_{a_i}^i$.

\begin{claim}\label{3}
Suppose $s\geq 3$ and $3\leq i+2\leq s$. Then $P_{i}$ and $P_{i+2}$ are vertex-disjoint.
\end{claim}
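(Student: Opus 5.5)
We use Claims~\ref{1} and~\ref{2} twice and combine them with the ordering of the paths along $\overrightarrow{C}$. Every path $P_j=u_1^j\overrightarrow{C}v_{a_j}^j$ is a forward subpath of the arc $x\overrightarrow{C}y$. Indeed, by construction all endpoints of chords in $A$ lie on $x\overrightarrow{C}y$, and each $u_1^j$ precedes $v_{a_j}^j$ in the forward order from $x$. So it suffices to show that the last vertex $v_{a_i}^i$ of $P_i$ comes strictly before the first vertex $u_1^{i+2}$ of $P_{i+2}$ in the linear order of $x\overrightarrow{C}y$.

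First apply Claim~\ref{2} to the index $i$ (valid since $i<s$). It gives $v_1^{i+1}\in V(v_{a_i}^{i}\overrightarrow{C}y)-v_{a_i}^{i}$, so $v_1^{i+1}$ lies strictly after $v_{a_i}^i$. Next apply Claim~\ref{1} to the index $i+1$ (valid since $i+1<s$, because $i+2\le s$). It gives $u_1^{i+2}\in V(v_1^{i+1}\overrightarrow{C}y)$, so $u_1^{i+2}$ lies at or after $v_1^{i+1}$. Chaining these, $u_1^{i+2}$ lies strictly after $v_{a_i}^i$ on $x\overrightarrow{C}y$.

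Since $P_i$ ends at $v_{a_i}^i$ and $P_{i+2}$ begins at $u_1^{i+2}$, both traversed forward within the linear arc $x\overrightarrow{C}y$, the two vertex sets are disjoint. The one point to justify carefully is that the segments $v_1^{i+1}\overrightarrow{C}y$ and so on never wrap past $y$ back to $x$. This holds because $e=xy$ lies in the interior of every chord, so no chord endpoint set straddles the edge $e$, and the whole construction stays on the path $x\overrightarrow{C}y$. I expect this ordering bookkeeping to be the only subtle step; the rest follows immediately from the two claims.
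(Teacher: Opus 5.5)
Your proof is correct and is the paper's argument: Claim~\ref{2} at index $i$ puts $v_1^{i+1}$ strictly after $v_{a_i}^i$, and Claim~\ref{1} at index $i+1$ puts $u_1^{i+2}$ at or after $v_1^{i+1}$, so $P_{i+2}$ begins strictly after $P_i$ ends. Your extra ``no wrap-around'' discussion is harmless but roundabout; the simplest justification is that $x\overrightarrow{C}y=C-e$ is a Hamiltonian path, so every segment $a\overrightarrow{C}b$ used here lies in a single linear order.
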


\noindent{\bf{Proof.}}
By Claim \ref{2}, $v_1^{i+1}\in V(v_{a_i}^i\overrightarrow{C}y)-v_{a_i}^i$. By Claim \ref{1}, $u_1^{i+2}\in V(v_1^{i+1}\overrightarrow{C}y)$. 
Then $u_1^{i+2}\in V(v_{a_i}^i\overrightarrow{C}y)-v_{a_i}^i$. 
Hence,  $P_{i}$ and $P_{i+2}$ are vertex-disjoint. \bqed

\vskip 2mm

For distinct indices $i$ and $j$, we say that $A_i$ and $A_j$ are \emph{non-intersecting} if $P_i$ and $P_j$ are vertex-disjoint.
By Claim \ref{3}, 
 the members of each of the two families $\{A_1,A_3,\ldots\}$ and $\{A_2,A_4,\ldots\}$ are pairwise non-intersecting.
Since $|A|\geq \sqrt{k}/2c_0$, one of these two families has total size at least $\sqrt{k}/4c_0$. 
We choose such a subfamily and, after relabelling its members, denote it by $A_1,\ldots,A_t$, where $t\geq 1$.  Then the sets $A_1,\ldots,A_t$ are pairwise non-intersecting and satisfy $\sum_{i=1}^{t}|A_i|\geq \sqrt{k}/4c_0$.
Recall that $|A_i|=a_i\geq 1$, we have $\sum_{i=1}^{t}a_i\geq \sqrt{k}/4c_0$.

If $a_i=1$, let $L_1^i=|u_1^i\overrightarrow{C}v_1^i|$.
If $a_i\geq 2$, then for every $1\leq j<a_i$, let $L_j^i=|u_j^i\overrightarrow{C}u_{j+1}^i|+|v_j^i\overrightarrow{C}v_{j+1}^i|$.
Let $\ell$ denote the total number of all $L_j^i$ defined in this way. For each $i$, the number of such $L_j^i$ is $1$ if $a_i=1$, and is $a_i-1$ if $a_i\geq 2$. In either case, it is at least $a_i/2$. Therefore, $\ell\geq( \sum_{i=1}^t a_i)/2\geq \sqrt{k}/8c_0$.
Since the sets $A_1,\ldots,A_t$ are pairwise non-intersecting, the arcs of $C$ appearing in the definitions of the $L_j^i$ are pairwise edge-disjoint. 
Hence the sum of all these $L_j^i$ is at most $n$. 
It follows that there exists some pair $(i,j)$ such that $L_j^i\leq n/\ell\leq 8c_0n/\sqrt{k}.$
If $a_i=1$, then we find a nontrivial cycle $u_1^iv_1^i\overrightarrow{C}u_1^i$ of length at least $n+1-8c_0n/\sqrt{k}$. If $a_i\geq 2$, then we find a nontrivial cycle $u_j^iv_j^i\overleftarrow{C}u_{j+1}^iv_{j+1}^i\overrightarrow{C}u_j^i$ of length at least $n+2-8c_0n/\sqrt{k}$.
In both cases, $G$ contains a nontrivial cycle of length at least
$$n-\frac{8c_0n}{\sqrt{k}}.$$
This completes the proof.
 \bqed

\section{Proof of Theorem \ref{main}}

For convenience, we may assume that $G$ is edge-minimal with respect to the assumptions of Theorem~\ref{main}; that is, no two vertices of degree at least $4$ are chord-adjacent. 
Indeed, if a chord joins two vertices both of degree at least $4$, then deleting this chord preserves the Hamiltonian cycle $C$ and the condition $\delta(G)\geq 3$.
An \emph{interlacing pair} is an interlacing chord set of size two. 
We say that a set of chords is \emph{independent} if no two chords in the set share a common endpoint. 
Let $I$ be a maximum independent chord set in $G$ such that  it can be partitioned into  interlacing pairs. Suppose $|I|=2m$. 
Let $G_0$ be the graph consisting of the Hamiltonian cycle $C$ together with all chords in $I$.
We will prove Theorem~\ref{main} by considering cases according to the size of $I$.
\vskip 2mm

\noindent{\bf Case 1.} $m\geq n^{2/3}$.
Let $G'\ss G_0$ be a graph consisting of $C$ and any chord subset $S\ss I$ of size at least $m/8$.
For any fixed edge $e\in E(C)$, the set $S$ inherits the poset structure defined above from the poset $S_e$ of chords in $G'$.
Suppose first that, for some constant $c_1>0$, the maximum size of a chain in this poset $S$ is at most $c_1n^{1/3}$. Since $m\geq n^{2/3}$, we have $c_1n^{1/3}\leq c_1\sqrt{m}\leq \sqrt{8}c_1\sqrt{m/8}\leq \lfloor4c_1\sqrt{|S|}\rfloor$.
Applying Theorem \ref{poset-based} on $(G,k,c_0)=(G',|S|,4c_1)$, then we can find a nontrivial cycle of length at least 
$$ n-\frac{32c_1n}{\sqrt{|S|}}\geq n-\frac{32c_1n}{\sqrt{m/8}}\geq n-cn^{2/3}$$
for some constant $c>0$, where the last inequality follows from $m\geq n^{2/3}$.
Since $G'\subseteq G$, this cycle is also a nontrivial cycle in $G$.

Thus, in the remainder of this case, we may consider that 
\vskip 1mm

\noindent{\bf{(I)}} for every edge \(e\in E(C)\) and every \(S\subseteq I\) with
\(|S|\ge m/8\), the poset induced by \(S\) in \(S_e\) contains a chain
of size at least \(c_1n^{1/3}\)  for some constant $c_1$.

Recall that $I$ is an independent chord  set, and hence no two chords in $I$ share an endpoint. 
We say a triple $\{u_1v_1,u_2v_2,u_3v_3\}$ in $I$ is \emph{tight} if 
$u_1,u_2,u_3,v_3,v_2,v_1$ appear in this order along the forward direction of $C$, and
$$|u_1\overrightarrow{C}u_3|+{|v_3\overrightarrow{C}v_1|}\leq \frac{3n^{2/3}}{c_1}.$$
We now present two claims under the assumption \textup{(I)}.

\begin{claim}[Gir\~{a}o, Kittipassorn, and Narayanan \cite{GKN}, Claim 3.3]\label{claim3.3}
If $G$ contains two tight triples whose middle chords interlace, then
$G$ contains a nontrivial cycle of length at least $n-c_2n^{2/3}$ for some constant $c_2$.
\end{claim}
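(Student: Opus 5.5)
The plan is to build the long cycle explicitly from a bounded number of chords: the two interlacing middle chords, together with the outer chords of the two tight triples, which serve as ``switches''. Let the triples be $T=\{u_1v_1,u_2v_2,u_3v_3\}$ and $T'=\{u_1'v_1',u_2'v_2',u_3'v_3'\}$. By tightness, all endpoints of $T$ lie in two short \emph{windows}: $W_1=V(u_1\overrightarrow{C}u_3)$ and $W_2=V(v_3\overrightarrow{C}v_1)$. Their total length is at most $3n^{2/3}/c_1$. Similarly, all endpoints of $T'$ lie in two short windows $W_1'$ and $W_2'$. The target is a cycle through all vertices outside $W_1\cup W_2\cup W_1'\cup W_2'$, which has length at least $n-12n^{2/3}/c_1$. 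This would give the claim with $c_2=12/c_1$.

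\emph{Normalisation.} Since $u_2v_2$ and $u_2'v_2'$ interlace, after possibly swapping the names $u_2'$ and $v_2'$ and reversing orientation, the four middle endpoints appear in the order $u_2,u_2',v_2,v_2'$ along $\overrightarrow{C}$. I first treat the generic case, in which the four windows are pairwise disjoint and appear in the cyclic order $W_1,W_1',W_2,W_2'$. Let $A,B,C',D$ be the four long arcs of $C$ strictly between consecutive windows. The basic interlacing cycle
\[
u_2v_2\overleftarrow{C}u_2'v_2'\overrightarrow{C}u_2
\]
already covers $B$ and $D$, but misses $A$ and $C'$. The idea is to use the nested outer chords to reroute inside the windows so that all four arcs are traversed. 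For instance, I would enter $W_2$ at its end adjacent to $C'$, use $v_1u_1$ to jump to $W_1$, run backward through $D$ into $W_2'$, use a chord of $T'$ to cross, and so on. Since $u_1,u_2,u_3$ and $v_3,v_2,v_1$ occur in opposite orders, these chords let a traversal arriving at one side of a window leave from either side.

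This is a finite combinatorial routing problem. I would solve it by listing the chords $u_1v_1,u_3v_3,u_1'v_1',u_3'v_3',u_2v_2,u_2'v_2'$ together with the four long arcs, and exhibiting one Hamiltonian-type cycle in the small auxiliary multigraph. In that multigraph, each window is contracted to its chord endpoints, connected by short window subpaths. Nontriviality is automatic because the cycle uses chords.

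\emph{Degenerate cases.} When windows overlap, for example $W_1\cap W_1'\neq\emptyset$, the relevant long arc $A$ essentially disappears. Then $u_2$ and $u_2'$ are within distance $O(n^{2/3})$, and the same holds for the other pair if needed. In that case the basic interlacing cycle above already misses only $O(n^{2/3})$ vertices. So the only real case is the disjoint one, possibly with one of $A,C'$ short, which is handled identically.

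The main obstacle I expect is the routing step itself: checking that a consistent choice of switches exists in every relative orientation of $T'$ against $T$. The swap of $u_2'$ and $v_2'$ changes which side of $W_1'$ faces $A$. I would first try to find one cycle pattern that uses $u_3v_3$ and $u_1'v_1'$ (respectively $u_1v_1$ and $u_3'v_3'$) and is invariant under the symmetries, and fall back on a short case split if necessary. Since this claim is Claim 3.3 of \cite{GKN} with $n^{2/3}$ in place of their window size, an alternative is to verify that their construction uses only the window-length bound from tightness, and then to cite it.
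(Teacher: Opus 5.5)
\textbf{Verdict: there is a genuine gap, in the degenerate (overlapping-window) case.} The paper gives no argument for this claim; it defers to Claim 3.3 of GKN, which is your stated fallback.

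\textbf{Your generic case is sound.} The route you describe (enter $W_2$ at $v_1$, jump by $v_1u_1$, go back through $D$, cross by $v_1'u_1'$, and so on) closes up as $v_1u_1\overleftarrow{C}v_1'u_1'\overleftarrow{C}u_3v_3\overleftarrow{C}u_3'v_3'\overleftarrow{C}v_1$. This cycle misses only window vertices when $W_1,W_1',W_2,W_2'$ are pairwise disjoint.

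\textbf{The gap.} From $W_1\cap W_1'\neq\emptyset$ you only learn that $u_2\overrightarrow{C}u_2'$ is short. Nothing forces $v_2\overrightarrow{C}v_2'$ to be short, so ``the same holds for the other pair'' is unjustified. Suppose $u_2,u_2'$ are adjacent on $C$ while $v_2,v_2'$ are at distance about $n/2$, with $v_2$ about $n/4$ after $u_2'$. Then both interlacing cycles of the two middle chords miss about $n/2$ vertices. Nor is this case ``handled identically''. Overlapping windows allow the order $u_1,u_1',u_2,u_2',u_3',u_3$. In that order the segment $u_1'\overleftarrow{C}u_3$ of your generic cycle runs almost all the way around $C$, through $v_1$ and $u_1$, so the generic routing is not a cycle.

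\textbf{What is missing: a routing that survives one collapsed arc.} Let $a,b,c,d$ be the lengths of $u_2\overrightarrow{C}u_2'$, $u_2'\overrightarrow{C}v_2$, $v_2\overrightarrow{C}v_2'$ and $v_2'\overrightarrow{C}u_2$, and set $L=3n^{2/3}/c_1$.
\begin{enumerate}
\item Suppose two of $a,b,c,d$ are at most $2L$. If they are adjacent, a single middle chord together with the complementary arc of $C$ misses at most $4L$ vertices. If they are opposite, one of the two basic interlacing cycles misses at most $4L$ vertices.
\item Otherwise at most one of them is $\le 2L$. The relabelling $u_i\leftrightarrow v_{4-i}$ preserves tightness. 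Using it to swap the roles of $T$ and $T'$ rotates $(a,b,c,d)$ to $(b,c,d,a)$, so you may assume $b,c,d>2L$. Then
$$u_1v_1\overrightarrow{C}v_3'u_3'\overrightarrow{C}v_2u_2\overrightarrow{C}u_2'v_2'\overrightarrow{C}u_1$$
is a cycle whatever $a$ is. Its four forward arcs are pairwise disjoint because $|u_2'\overrightarrow{C}u_3'|<b$, $|v_2\overrightarrow{C}v_1|+|v_3'\overrightarrow{C}v_2'|<c$ and $|u_1\overrightarrow{C}u_2|<d$. It misses only interior vertices of $u_1\overrightarrow{C}u_2$, $u_2'\overrightarrow{C}u_3'$, $v_2\overrightarrow{C}v_1$ and $v_3'\overrightarrow{C}v_2'$, which is at most $2L$ in total.
\end{enumerate}
So the cycle that works in general pairs the two middle chords with the outer chord of one triple and the inner chord of the other, rather than using outer/inner pairs of both triples. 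This proves the claim with $c_2=12/c_1$.
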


\begin{claim}[Gir\~{a}o, Kittipassorn, and Narayanan \cite{GKN}, Claim 3.4]\label{claim3.4}
For any $K \geq 1/2$ and any $S_0\ss I$ with $|S_0|=Km$,   $S_0$ contains $Km/4$ pairwise
disjoint tight triples.
\end{claim}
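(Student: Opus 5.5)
Starting from $S_0$, I would repeatedly extract tight triples using assumption (I) and remove them until too few chords remain. Put $S^{(0)}=S_0$. At step $r$, as long as $|S^{(r)}|\ge m/8$ (so that (I) applies), fix any edge $e\in E(C)$. By (I), the poset induced by $S^{(r)}$ in $S_e$ contains a chain of size at least $c_1n^{1/3}$. Every chain in $S_e$ is a nested family of chords. Since $I$ is independent, its chords have distinct endpoints, so along $\overrightarrow{C}$ they appear as $u_1,u_2,\ldots,u_q,v_q,\ldots,v_2,v_1$ with $q\ge c_1n^{1/3}$.

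The key step is to find a tight triple inside this chain. Split the chain into consecutive blocks of three chords, $\{u_{3j+1}v_{3j+1},u_{3j+2}v_{3j+2},u_{3j+3}v_{3j+3}\}$. There are at least $\lfloor q/3\rfloor$ such blocks. For different $j$, the arcs $u_{3j+1}\overrightarrow{C}u_{3j+3}$ and $v_{3j+3}\overrightarrow{C}v_{3j+1}$ are edge-disjoint, so the quantities $|u_{3j+1}\overrightarrow{C}u_{3j+3}|+|v_{3j+3}\overrightarrow{C}v_{3j+1}|$ sum to at most $n$. By averaging, some block has this quantity at most $n/\lfloor q/3\rfloor\le 3n/(c_1n^{1/3})=3n^{2/3}/c_1$, up to floor effects. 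Those floor effects are absorbed either by taking $n$ large or by using the slack in the constant. That block is a tight triple.

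Next, remove the three chords of this triple from $S^{(r)}$ to obtain $S^{(r+1)}$ and repeat. The extracted triples are pairwise disjoint, because they use distinct chords. The process continues while $|S^{(r)}|\ge m/8$, so it runs for at least $(Km-m/8)/3$ steps. Since $K\ge 1/2$, we have $m/8\le Km/4$, and hence $(Km-m/8)/3\ge (Km-Km/4)/3=Km/4$. This gives the required $Km/4$ pairwise disjoint tight triples.

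The main obstacle is the averaging step. There I must check that the arcs are genuinely edge-disjoint and that the floor in $\lfloor q/3\rfloor$ does not break the constant $3/c_1$. If the floor causes trouble, I would simply use only $q'=3\lfloor q/3\rfloor$ chords and note that $q'\ge q-2$. For large $n$ this still gives a bound of at most $3n^{2/3}/c_1$ once $c_1$ is replaced by a slightly smaller constant in (I), which is harmless since (I) holds with $c_1$. Alternatively, the tight-triple threshold can be stated with a constant of that form.
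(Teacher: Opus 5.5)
Your proposal is correct and is the same greedy argument the paper imports verbatim from \cite{GKN}: while fewer than $Km/4$ triples have been removed, more than $Km/4\ge m/8$ chords of $S_0$ remain, so (I) yields a chain among them, and pigeonholing over consecutive triples of that chain (whose arcs are edge-disjoint) produces one more tight triple. On the floors, which the paper ignores: letting $n$ grow does not by itself help, since the excess of $n/\lfloor q/3\rfloor$ over $3n/q$ can be of order $n^{1/3}$; but you can keep $c_1$ unchanged by using the overlapping triples $\{u_{2j+1}v_{2j+1},u_{2j+2}v_{2j+2},u_{2j+3}v_{2j+3}\}$, whose arcs $u_{2j+1}\overrightarrow{C}u_{2j+3}$ and $v_{2j+3}\overrightarrow{C}v_{2j+1}$ are still pairwise edge-disjoint and which number $\lfloor (q-1)/2\rfloor\ge q/3$ once $q\ge 6$, and this gives the bound $3n/q\le 3n^{2/3}/c_1$ exactly.
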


Although our assumption (I) and the definition of a tight triple is not the same as  \cite{GKN} (Page 275),     the proofs of the corresponding claims carry over verbatim after replacing $m^{1/3}$ in \cite{GKN} by $8c_1n^{1/3}$. 
We therefore omit the tedious details.
By Claim~\ref{claim3.3}, together with Claim~\ref{claim3.4} applied with $(K,S_0)=(2,I)$, we may assume that $I$ contains $m/2$ pairwise disjoint tight triples, denote by $I_1$, and all the middle chords of $I_1$ are independent and not pairwise interlacing. 
Let $I_2$ be the set of all middle chords of the triples in $I_1$. 
Since $I$ consists of $m$ interlacing pairs, each chord in $I_2$ has a distinct chord in $I$ that interlaces it. Let $I_3$ denote the set of these interlacing chords. Then $|I_3|=m/2$ and $I_2\cap I_3=\emptyset$.
Applying Claim~\ref{claim3.4} again with $(K,S_0)=(1/2,I_3)$, we obtain $m/8$ new pairwise disjoint tight triples. Let $T_3$ be one of these new tight triples, and let $e_3$ be its middle chord. Since $e_3\in I_3$, there exists a chord $e_2\in I_2$ that interlaces $e_3$. Let $T_2$ be the tight triple in $I_1$ whose middle chord is $e_2$. Then $T_2$ and $T_3$ satisfy the condition in Claim~\ref{claim3.3}, and we are done.
\vskip 2mm

\noindent{\bf{Case 2.}}
$m\leq n^{2/3}$.

Let $X_I$ be the set of vertices incident with  chords in $I$. 
Then $|X_I|=4m$.
Deleting from $G$ all chords incident with $X_I$, we obtain a graph $G_1$. Then each vertex of degree 2 in $G_1$ has a chord-neighbor in $X_I$.
By the maximality of $I$, it is straightforward to check that $G_1$ is a $2$-connected outerplanar graph.
We call a chord $xy$ of $G_1$ \emph{minimal} if one of its two domains contains no pair of endpoints of any other chord in $G_1$. And we denote this domain as \emph{minimal} domain of $xy$.
Let $M$ be the number of minimal chords in $G_1$. Since $G_1$ is a $2$-connected outerplanar graph,  $M\geq 2$ when $G_1$ contains at least two chords, and
  all  minimal domains are  pairwise disjoint, except the endpoints of  minimal chords.
If $M\geq \sqrt{n}$, then there exists a minimal chord $xy$ such that one of the domains determined by $xy$ has size at most $n/\sqrt{n}=\sqrt{n}$. Hence there exists a constant $c_3$ such that either $xy\overrightarrow{C}x$ or $xy\overleftarrow{C}x$ has length at least $n-c_3\sqrt{n}$. This gives a nontrivial cycle in $G$, completing the proof. Therefore we may suppose that $M\leq \sqrt{n}$.

Let $v\in V(G_1)$ such that $v$ is chord-adjacent to two consecutive vertices $v_x,v_y$ of $C$  in $G_1$.
Since $d_G(v)\geq d_{G_1}(v)\geq 4$, by the minimality of $G$, the degrees of both $v_x$ and $v_y$ in $G$ and in $G_1$ are equal 3. In particular, $v_x$ and $v_y$ have no chord-neighbor in $X_I$, and both $v_x$ and $v_y$ cannot be chord-adjacent to two consecutive vertices of $C$.
We obtain $G_2$ from $G_1$ by repeatedly applying the following operation: if there exists a vertex $v$ of $G_1$ that is chord-adjacent to two consecutive vertices $v_x,v_y$ of $C$, then contract $v_xv_y$ to a vertex $v'$, while two edges $vv_x,vv_y$ become one edge  $vv'$. Then $G_2$ is also simple and Hamiltonian. Denote by $C_2$ the Hamiltonian cycle obtained from $C$ after these contractions.
It is easy to see that $G_2$ is still a $2$-connected outerplanar graph, $G_2$ has at most $M$ minimal chords,  every contracted vertex has degree $3$ in $G_2$, and each vertex of degree $2$ in $G_2$ has a neighbor in $X_I$. In particular, no contracted  vertex has a chord-neighbor in $X_I$, and no two contracted vertices are chord-adjacent.

Observe that each vertex of $G_2$ is either a vertex of $G$ or a contracted vertex corresponding to more than one vertex of $G$. In particular,  no contracted vertices belong to $X_I$. We call a contracted vertex \emph{red} if it is near to a vertex of $X_I$ in $C_2$, and \emph{blue} otherwise. Let $r$ denote the number of red vertices in $G_2$.  Then $r\leq 2|X_I|=8m$. Although our definitions of red and blue vertices are different from Gir\~{a}o, Kittipassorn, and Narayanan \cite{GKN}, we can also use the  
 following lemma proved by them.

\begin{lem}[Gir\~{a}o, Kittipassorn, and Narayanan \cite{GKN}, Lemma 3.5]\label{lem3.5}
Let $G$ be a graph  with a  Hamiltonian cycle $C$ and 
with the property that no two chords of $G$ interlace. Suppose that no vertex of $G$
is chord-adjacent to two consecutive vertices of $C$, and that no two vertices of $G$
of degree greater than $3$ are chord-adjacent. Also, assume that there are disjoint subsets
$R$ and $B$ of $V(G)$ (whose elements we shall call red and blue respectively) such
that

(I) every vertex in $R\cup B$ has degree $3$;

(II) no two vertices in $R\cup B$ are chord-adjacent.

Then, writing $M \geq 2$ for the number of minimal chords in $G$ and setting $r = |R|$,
there exists a set $S \ss V(G)$  of vertices such that

(1) $S$ dominates the chords of $G$;

(2) $S$ contains no red vertices;

(3) $S$ contains at most $r+M-2$ pairs of consecutive vertices of $C$, and
none of these pairs contains a blue vertex.
\end{lem}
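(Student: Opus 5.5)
The plan is to prove Lemma~\ref{lem3.5} by induction on the number of chords, exploiting the tree-like structure of a Hamiltonian graph with no interlacing chords. Embed $G$ with $C$ as the outer face. Since no two chords interlace, the bounded faces form a weak dual which is a tree $\mathcal T$. Its leaves are exactly the faces cut off by the minimal chords. Hence $M$ is the number of leaves of $\mathcal T$, and a path-like $\mathcal T$ gives $M=2$. The aim is to build $S$ face by face, charging every pair of consecutive vertices of $C$ inside $S$ either to a red vertex or to a "branching" of $\mathcal T$. The number of branchings is controlled by $M-2$, because a tree with $M$ leaves has total excess degree $\sum(\deg-2)^+=M-2$.

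\textbf{Induction step.} Fix a minimal chord $xy$ with minimal domain $D$ and delete it to get $G'$. All hypotheses are preserved under chord deletion. Either $G'$ has at most one chord (a trivial base case handled by hand), or $G'$ has $M'\le M$ minimal chords, with $M'=M$ exactly when the chord bounding the face of $D$ becomes minimal in $G'$. The red and blue sets stay the same, so induction gives a set $S'$ for $G'$ with at most $r+M'-2$ bad pairs.

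\textbf{Covering $xy$.} If $x\in S'$ or $y\in S'$ we are done. Otherwise we must add one endpoint of $xy$.
\begin{itemize}
\item By (II), at most one of $x,y$ lies in $R\cup B$, so at least one endpoint is non-red and may be added without violating (2).
\item If exactly one of $x,y$ has degree $>3$, we prefer to add it. Since no two high-degree vertices are chord-adjacent, its chord-neighbours all have degree $3$, and this will be used to argue that its $C$-neighbours need not lie in $S'$.
\end{itemize}

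\textbf{Counting new consecutive pairs.} A newly added endpoint, say $x$, can create at most two new consecutive pairs: with $x^-$ and with $x^+$ on $C$. One of these lies inside $D$. Because $D$ contains no other chord endpoints apart from $x,y$, and because no vertex is chord-adjacent to two consecutive vertices, that neighbour is an internal vertex of a chordless arc. The plan is to show that such a vertex is never placed in $S'$; to make this automatic, I would strengthen the inductive statement so that $S$ only contains endpoints of chords. The other neighbour lies outside $D$. A pair arising there must be paid for:
\begin{itemize}
\item by the drop from $M$ to $M'$ when $M'<M$; or
\item by a red vertex adjacent to the forced choice, namely when we had to avoid a red endpoint, which contributes at most one to $r$ per red vertex; or
\item otherwise, by switching to the other endpoint $y$.
\end{itemize}
The blue condition in (3) is handled the same way. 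A blue vertex has degree $3$, hence exactly one chord, so when a blue vertex would end up in a consecutive pair of $S$ we instead take its chord-neighbour, which is non-blue by (II).

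\textbf{Main obstacle.} The hardest step is this charging argument in the case $M'=M$ with both endpoints non-red. There, a new consecutive pair must be avoided outright by choosing between $x$ and $y$ correctly. I would first try to prove the stronger invariant that, whenever a leaf is removed without decreasing $M$, at least one of $x,y$ has both $C$-neighbours outside $S'$. The argument would use the facts that no vertex sees two consecutive vertices by chords and that high-degree vertices are not chord-adjacent. If this invariant fails, the fallback is to induct on the whole tree $\mathcal T$ at once, choosing $S$ along root-to-leaf paths, so that consecutive pairs occur only at branch faces, and counting these via $\sum(\deg-2)^+=M-2$.
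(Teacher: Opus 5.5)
The paper gives no proof of this lemma; it is quoted from Gir\~{a}o--Kittipassorn--Narayanan. Your proposal therefore has to stand on its own, and it does not yet. The step you flag as the ``main obstacle'' is essentially the whole content of the lemma, and it is left open.

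Your invariant for the case $M'=M$ is false for an arbitrary inductive $S'$. Suppose the face across $xy$ is the quadrilateral $x\,y\,a\,b$, with $C$-edges $ya$, $bx$ and chords $xy$, $ab$ (a legal configuration). Nothing in (1)--(3) prevents $a,b\in S'$, and then both $x$ and $y$ have a $C$-neighbour in $S'$. Pruning $S'$ to be inclusion-minimal fixes that instance, but the colours defeat local repairs:
\begin{itemize}
\item Take the same quadrilateral with $x\in R$, $a\in B$, $a\in S'$ and $b\notin S'$. This is allowed in $G'$, since $y$ has degree $2$ there.
\item You must add $y$, and then $\{y,a\}$ is a forbidden pair.
\item Your remedy, replacing the blue vertex $a$ by its chord-neighbour $b$, creates the pair $\{b^-,b\}$ whenever $b^-\in S'$. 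That pair is not in your budget.
\item If $b^-$ is itself blue, the swap must be repeated, and you give no control on this cascade.
\end{itemize}
The same problem arises when $M'<M$: the pair you charge to the drop in $M$ may contain a blue vertex.

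Your rule ``prefer the endpoint of degree $>3$'' is also unjustified. Suppose the face across $xy$ is bounded by $xy$, $yb$ and a chordless arc from $b$ to $x$. Then adding $x$ is free, because both its $C$-neighbours have degree $2$. One lies in $D$; the other is an inner vertex of the arc from $b$ to $x$, which has length at least $2$ by the no-two-consecutive hypothesis. Adding $y$, by contrast, may create a pair at $y^+$, which lies beyond $yb$ and can be in $S'$.

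Finally, deleting $xy$ does not preserve (I) when $x$ or $y$ is coloured, since they drop to degree $2$. So $R$ and $B$ cannot ``stay the same''.

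The missing idea is that the constraint imposed by the deleted leaf must be passed into the inductive call, not patched afterwards, and the colour classes are the natural vehicle for this. In the example above, $x$ has to leave $R$ in $G'$ anyway. Apply induction to $G'$ with $a$ recoloured red instead. This is legal: $a$ has degree $3$, its only chord-neighbour $b$ is uncoloured by (II), and $r$ is unchanged. Then $S'$ avoids $a$, adding $y$ creates no new pair, and the bound $r+M-2$ is kept.

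A complete proof needs a case analysis of this kind:
\begin{itemize}
\item on the shape of the face across $xy$ (arcs of length $0$, $1$ or $\ge 2$ at each end);
\item on the degrees and colours of $x$ and $y$;
\item on whether $M'<M$.
\end{itemize}
In each case one must specify the recolouring of $G'$ and check (I), (II) and the budget $r'+M'-2$. Your weak-dual-tree and leaf-counting heuristic ($\sum(\deg-2)^+=M-2$) is the right intuition. However, neither your main line nor the fallback (``induct on the whole tree along root-to-leaf paths'') supplies this argument.
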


If $G_2$ has no minimal chord, then we can deduce that $G_2=G_1=C$ and let $S=\emptyset$. Hence, $S$  satisfies (1)-(3) in Lemma \ref{lem3.5} when we take $G=G_2$ in Lemma \ref{lem3.5}. If $G_2$ has exactly one minimal chord, then $G_2$ is a union of $C_2$ and a chord $uv$, and let $S$ be the set of the vertex $u$ or $v$, which is not a red vertex. It is easy to check that $S$ also satisfies (1)-(3) in Lemma \ref{lem3.5} when we take $G=G_2$ in Lemma \ref{lem3.5}.
If $G_2$ has at least 2 minimal chords, then  $G_2$ with red vertices as $R$ and blue vertices as $B$ satisfies Lemma \ref{lem3.5}. Hence there exists a set $S\ss V(G_2)$ satisfying the properties stated in Lemma \ref{lem3.5}.

Note that no red or blue vertex has a chord-neighbor in $X_I$. We obtain $G_3$ by adding back the chords incident with vertices in $X_I$. 
Since every vertex of degree $2$ in $G_2$ has a chord-neighbor in $X_I$, by Lemma \ref{lem3.5}(1), the set $X=X_I\cup S$ is a dominating set of $G_3-E(C_2)$ and contains no red vertices by Lemma \ref{lem3.5}(2). 
Now we investigate the  pairs of consecutive vertices of $C_2$ containing in $X$.
Let $Q_1$ be the set of pairs of consecutive vertices of $C_2$ containing in $X$, each of which has at least one vertex in $X_I$. Since $|X_I|=4m$, $|Q_1|\leq 8m$. By the definition of  red vertices and Lemma \ref{lem3.5}(2), every vertex of each pair in $Q_1$ is a vertex in $V(G)$. Let $Q_2$ be the set of pairs of consecutive vertices  of $C_2$, each of which have both vertices in $S$. By Lemma \ref{lem3.5}(2)(3), every vertex in each pair of $Q_2$ is a vertex in $V(G)$ and  $|Q_2|\leq r+M-2\leq 8m+M-2$. Then the number of pairs of consecutive vertices of $C_2$ in $X$ is at most $|Q_1|+|Q_2|\leq 16m+M-2$, and every vertex of each pair of consecutive vertices of $C_2$ in $X$ is a vertex in $V(G)$.

In order to use Theorem \ref{Thomassen}, we need to ensure that $X$ is independent in the subgraph spanned by the edges of $C_2$. To this end, we repeatedly contract an edge $x_1x_2\in E(C_2)$ with $x_1,x_2\in X$ to a new vertex $x'$. So, if $x_1$ and $x_2$ have a common neighbor $x$, then the contraction may create two parallel edges between $x$ and the contracted vertex.
Denote by $G_4$ the resulting  graph (maybe have some parallel edges) and by $C_4$ its Hamiltonian cycle corresponding to $C_2$. 
Then $X$ is transformed into a set $X_4$ in $G_4$.
It is easy to see that $G_4$ and $X_4$ satisfy the conditions of Theorem \ref{Thomassen}. Hence there exists a nontrivial Hamiltonian cycle $C_4'$ containing at least one edge not in $C_4$. 

Lifting $C_4'$ back to $G_3$, we obtain a nontrivial cycle $C_3'$ in $G_3$ that misses at most  $16m+M-2$ vertices that are also vertices of $G$.
Recall that $G_1$ is obtained from $G$ by deleting some chords incident with $X_I$; $G_2$ is obtained by contracting certain vertex sets, none of which has a chord-neighbor in $X_I$, into red or blue vertices; and $G_3$ is obtained by adding back the chords incident with $X_I$. Thus, by expanding all red and blue vertices in $G_3$, we recover $G$.
We now describe how to obtain a large nontrivial cycle $C'\ss G$ from $C_3'$. Consider any path $xvy\ss C_3'$ where $v$ is a red or blue vertex of $G_3$. Let $v'$ be the unique chord-neighbor of $v$ in $G_3$, and let $v_1,\ldots,v_t$ be the corresponding consecutive vertices of $C$ represented by $v$. Note that $d_{G_3}(v)=3$. If $xvy\ss C_3$, then we replace $xvy$ with the path $xv_1\cdots v_ty$. Otherwise, $x=v'$ or $y=v'$. By symmetry, suppose that $x=v'$. Then we replace $v'vy$ with the path $v'v_1\cdots v_ty$.

By the above construction of $C'$, all the vertices corresponding to red or blue vertices in $C_3'$ will be contained in $C'$. That is, no more vertices in $G$ will be missed when we recover $C_3'$ to $C'$. 
 We can therefore find a nontrivial cycle in $G$ missing at most 
$$16m+M-2\leq c_4n^{2/3}$$
vertices for some constant $c_4$. This completes the proof. \bqed

\vskip 5mm
\noindent{\bf\large Remark.}
Since we use a constructive method based on a poset result in Case 1 and a nonconstructive method in Case 2, the bound is best possible by using our methods. hence,  new methods should be used  to approach Conjecture \ref{conj}.

\vskip 5mm
\noindent{\bf\large Acknowledgements}
\vskip 2mm

  Wang is supported by 
 National Key R\&D Program of China under grant number 2023YFA1010202 and NSFC under grant number 12401447.

\end{document}